\newcommand{\exclude}[1]{}
\renewcommand{\Omega}{\varOmega}
\newcounter{commentcounter}
\long\def\symbolfootnote[#1]#2{\begingroup%
	\def\thefootnote{\fnsymbol{footnote}}\footnote[#1]{#2}\endgroup}
\newcommand{\ignore}[1]{}
\newcommand\tabref{Table~\ref}
\title{An Exact Method for Constrained Maximization of the Conditional Value-at-Risk of  a Class of Stochastic Submodular Functions}
\keywords{conditional value-at-risk; stochastic programming; oracle; stochastic set covering; lifting; submodular maximization}
\begin{document}

	\maketitle \centerline{\today}

	\begin{abstract} We consider a class of risk-averse submodular maximization problems (RASM) where the objective is the conditional value-at-risk (CVaR) of a random nondecreasing submodular function at a given risk level. We propose valid inequalities and an exact general method for solving RASM under the assumption that we have an efficient oracle that computes the CVaR of the random function. We demonstrate the proposed method on a stochastic set covering problem that admits an efficient CVaR oracle for the random coverage function.
	\end{abstract}
	
\section{Introduction}
\footnotetext[1]{Corresponding author} 
We consider  a class of risk-averse submodular maximization problems (RASM) recently  formulated by \citet{RiskSub2015}. Formally, let $V = \{1,\dots,n\}$ be a finite set, and $\bar \Omega$ be a probability space. We define an outcome mapping $\sigma: 2^{|V|} \times \bar \Omega \rightarrow \mathbb R$. The random outcome $\sigma(X): \bar \Omega \rightarrow \mathbb R$ is defined by $\sigma(X)(\omega) = \sigma(X,\omega)$ for all $X \subseteq {V}$ and $\omega \in \bar \Omega$. We assume that $\sigma(X)(\omega)$ is a nondecreasing submodular set function. With a slight abuse of notation, we refer to the submodular function $\sigma(X)$ for $X\subseteq V$  as $\sigma(x)$ for $x\in \{0,1\}^n$ interchangeably, where $x$ is the characteristic vector of $X$, and the usage will be clear from the context. We measure the risk associated with this function using  {\it conditional value-at-risk} (CVaR), where larger function values correspond to less risky random outcomes. In this context, risk measures are referred to as {\it acceptability functionals}. CVaR was first introduced by Artzner et al. \cite{cvar1999} and has been widely used, especially in finance, due to its desirable properties (e.g., coherence and tractability). 

Formally, let $[z]_+ = \max (z,0)$ be the positive part of a number $z \in \mathbb R$. Let $\eta$ be a variable that measures the {\it value-at-risk} (VaR) at a given risk level. For a given $\bar x$, the value-at-risk at a risk level $\alpha \in (0,1]$ is defined as
\begin{equation}\label{model:var_def}
\text{VaR}_{\alpha}(\sigma(\bar x)) =  \max \Big \{ \eta : \mathbb P( \sigma(\bar x) \ge \eta ) \ge 1-\alpha,  \eta \in \mathbb R \Big \}.  
\end{equation} 
For a given $\bar x $, the conditional value-at-risk at a risk level $\alpha \in (0,1]$ is defined  as  
\begin{equation}\label{model:cvar_def}
\text{CVaR}_{\alpha}(\sigma(\bar x))  =  \max \Big \{ \eta - \frac{1}{\alpha} \mathbb E([\eta-\sigma(\bar x)]_+) : \eta \in \mathbb R \Big \}, 
\end{equation} 
and it provides the conditional expected value of $\sigma(\bar x)$ that is no larger than the value-at-risk at the risk level $\alpha$ \cite{Rock2000}. In general, the risk level $\alpha$ is small, such as $\alpha = 0.05$ or $0.01$. Let $\mathcal{X}$ be the deterministic constraints on the variables $x$. Given a risk level $\alpha \in (0,1]$, a class of risk-averse submodular maximization problems (RASM) is defined as
\begin{equation}\label{model:max_cvar}
\max_{x \in \mathcal{X}}  \text{CVaR}_{\alpha}(\sigma(x)).
\end{equation} 

Let $\mathbb E[\sigma(x)]$ represent the expectation of $\sigma(x)$ with respect to $\omega$. In the next observation, we review properties of CVaR that will be useful  in developing valid inequalities for RASM.

\begin{observation}\label{prop:CVaR_sub}
	From the definition of CVaR in \eqref{model:cvar_def}, for a given $\bar x \in \mathcal{X}$, we have 
	\begin{itemize}				
		\item[(i)]
		$\text{CVaR}_{1}(\sigma(\bar x)) =\mathbb E[\sigma(\bar x)]$, and because $\mathbb E[\sigma(\bar x)]$  is submodular  (see, e.g., \cite{first2016}), so is $\text{CVaR}_{1}(\sigma(\bar x)) $.
		
		\item[(ii)] $\text{CVaR}_{\alpha_1}(\sigma(\bar x)) \le \text{CVaR}_{\alpha_2}(\sigma(\bar x))$ for $\alpha_1, \alpha_2 \in (0,1]$ and $\alpha_1 \le \alpha_2$.
	\end{itemize}	
\end{observation}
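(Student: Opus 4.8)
The plan is to treat the two parts separately, working directly from the variational formula \eqref{model:cvar_def} in both cases.

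For part (i), I would substitute $\alpha = 1$ to obtain $\text{CVaR}_1(\sigma(\bar x)) = \max_{\eta \in \mathbb R}\{\,\eta - \mathbb E([\eta - \sigma(\bar x)]_+)\,\}$. The pointwise inequality $[\eta - \sigma(\bar x)]_+ \ge \eta - \sigma(\bar x)$, together with monotonicity of expectation, gives $\eta - \mathbb E([\eta - \sigma(\bar x)]_+) \le \eta - (\eta - \mathbb E[\sigma(\bar x)]) = \mathbb E[\sigma(\bar x)]$ for every $\eta$, so the maximum is at most $\mathbb E[\sigma(\bar x)]$. For the reverse inequality, I would note that $g(\eta) := \eta - \mathbb E([\eta - \sigma(\bar x)]_+)$ is nondecreasing in $\eta$: since $t \mapsto [t]_+$ is $1$-Lipschitz, increasing $\eta$ by $\delta > 0$ increases $\mathbb E([\eta - \sigma(\bar x)]_+)$ by at most $\delta$, hence $g(\eta + \delta) \ge g(\eta)$. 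Because $V$ is finite, $\sigma(\bar x)$ is bounded, so for $\eta$ no smaller than its essential supremum we have $[\eta - \sigma(\bar x)]_+ = \eta - \sigma(\bar x)$ almost surely and thus $g(\eta) = \mathbb E[\sigma(\bar x)]$; combined with monotonicity, the maximum equals $\mathbb E[\sigma(\bar x)]$. The submodularity assertion then follows since $\mathbb E[\sigma(x)] = \int \sigma(x)(\omega)\,\mathrm{d}\mathbb{P}(\omega)$ is a nonnegative mixture of the submodular functions $x \mapsto \sigma(x)(\omega)$, and submodularity is preserved under such mixtures (as recorded in \cite{first2016}).

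For part (ii), I would fix an arbitrary $\eta \in \mathbb R$. Since $\mathbb E([\eta - \sigma(\bar x)]_+) \ge 0$ and $\alpha_1 \le \alpha_2$ implies $\tfrac{1}{\alpha_1} \ge \tfrac{1}{\alpha_2} > 0$, we get $\eta - \tfrac{1}{\alpha_1}\mathbb E([\eta - \sigma(\bar x)]_+) \le \eta - \tfrac{1}{\alpha_2}\mathbb E([\eta - \sigma(\bar x)]_+)$. As this holds for every $\eta$, taking the maximum over $\eta$ on each side preserves the inequality, which is exactly $\text{CVaR}_{\alpha_1}(\sigma(\bar x)) \le \text{CVaR}_{\alpha_2}(\sigma(\bar x))$.

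Neither step presents a real obstacle; the only place that needs a moment's care is the attainment claim in part (i) — verifying that the supremum in \eqref{model:cvar_def} at $\alpha = 1$ is actually achieved and equal to $\mathbb E[\sigma(\bar x)]$ — which rests on the boundedness of $\sigma(\bar x)$ inherited from the finiteness of $V$. The remaining manipulations are one-line consequences of the definition.
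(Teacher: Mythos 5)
Your argument is correct and is essentially the verification the paper leaves implicit: the Observation is stated without proof, asserted to follow from the definition \eqref{model:cvar_def} together with the cited fact that expectations of submodular functions are submodular, and your direct manipulation of the variational formula (pointwise bound plus monotonicity of $g$ for part (i), termwise comparison of $1/\alpha_1$ and $1/\alpha_2$ for part (ii)) is exactly the intended reasoning. One small imprecision: the boundedness of the random variable $\sigma(\bar x)$ over $\omega$ does not follow from the finiteness of $V$ (which only bounds the number of sets $X$, not the range of $\sigma(X,\cdot)$ over $\bar\Omega$); it comes from the standing integrability/boundedness of the random outcomes in the paper's setting (e.g., the finite-scenario representation \eqref{model:cvar_sub} or the coverage count in RASC), and in the unbounded-but-integrable case the identity $\sup_\eta g(\eta)=\mathbb E[\sigma(\bar x)]$ still holds by letting $\eta\to\infty$, since $g(\eta)=\mathbb E[\sigma(\bar x)]-\mathbb E([\sigma(\bar x)-\eta]_+)\to\mathbb E[\sigma(\bar x)]$.
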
    
Due to Observation \ref{prop:CVaR_sub}(i), when $\alpha=1$, problem \eqref{model:max_cvar} is equivalent to the  $\mathcal{NP}$-hard {\it risk-neutral} submodular maximization problem $\max_{x \in \mathcal{X}}  \mathbb E[\sigma(x)]$  (see, e.g., \cite{KKT03,SubMaxApp2012,VH93} for the associated applications).

Note that for a nondecreasing submodular function $\sigma(\cdot)$, Proposition 2 of \cite{NW81} shows that  the {\it submodular inequality}
\begin{equation}\label{eq:submod}
\sigma(x)\le \sigma(S)+\sum_{j\in V\setminus S} (\sigma(\{j\}\cup S)-\sigma(S))x_j, \quad \forall S\subseteq V,
\end{equation}
is valid.  \citet{Ahmed2011} and \citet{Yu2017} strengthen the submodular inequalities by lifting, under the assumption that the submodular utility function is strictly concave, increasing, and differentiable, assumptions we do not make in this paper. 
However, \citet{RiskSub2015} shows that CVaR of a stochastic submodular function is no longer submodular for any risk level $\alpha\in(0,1)$. The author proves that unless $\mathcal{P} = \mathcal{NP}$, there is no polynomial-time  approximation algorithm with a multiplicative error for RASM under some reasonable assumptions on the given risk level. Hence it is theoretically intractable to find a solution that is close to optimal in polynomial time without any assumption on the feasible region or the probability distribution.  Along this line of work,  \citet{Zhou2018} propose a sequential greedy algorithm for the CVaR maximization problem in \cite{RiskSub2015}. The authors show that the proposed algorithm gives a solution that is within a constant factor of optimal with an additional additive term that depends on the optimal value and  a parameter related  to the curvature of the submodular set function. However, the running time of this algorithm is exponential as it depends quadratically on the cardinality of the feasible set. 

Instead of solving the generic CVaR maximization problem in \cite{RiskSub2015} directly,  \citet{Ohsaka2017}  consider a specific risk-averse submodular optimization problem arising in social networks. The authors relax the problem by replacing the combinatorial decisions that determine a single set of a predetermined size with a choice of a portfolio  (convex combination) of sets of the given size. They also give a polynomial-time algorithm that obtains a portfolio with a CVaR value that has a provable guarantee with a specified probability. 
\citet{Wilder2017} generalizes these results and proposes an approximation algorithm  for maximizing the CVaR of a generic monotone {\it continuous} submodular function (or a portfolio of dicrete sets) with a worst-case guarantee within $(1-\frac{1}{e})$ factor of the optimal solution. That is, a function $F: \bar V \rightarrow \mathbb {R}$ is continuous submodular if and only if $F(a)+F(b) \ge F(a \vee b)+F(a \wedge b)$ for any $a,b \in \bar V \subseteq \mathbb R_+^n$, where $\bar V_i$ is a compact subset of $\mathbb R$, $\bar V = \prod_{i=1}^{n} \bar V_i \subseteq \mathbb R_+^n $, and notations $\vee$ and $\wedge$ denote the coordinate-wise minimum and maximum operations, respectively (see, e.g., \cite{Bian17} and \cite{ChenOnlineSub18}).  In contrast to this line of work that considers a portfolio of discrete sets as decision variables,  we consider the discrete case, where we must choose one set of decisions for implementability purposes. 
 In another line of work, \citet{Wu2017} propose an  exact method  for the minimal cost selection of $x \in \mathcal{X}$  that satisfies the chance constraint $\mathbb P( \sigma(x) \ge \tau )$ for a given $\tau$, where the authors assume that there exists a probability oracle for evaluating the chance constraint for a given solution. However, the exact method of \cite{Wu2017}  cannot be applied to the CVaR maximization problem (or its VaR maximization counterpart) directly. 

Note that for a given $\bar x \in \mathcal{X}$ and a finite probability space $(\Omega, 2^\Omega, \mathbb P)$ with a set of $N \in \mathbb N$ realizations (scenarios) $\Omega =\{\omega_1, \dots, \omega_N\}$, $\mathbb P (\omega_i) = p_i$ for $i = 1, \dots N$, CVaR is given by  \cite{Rock2000}
\begin{equation}\label{model:cvar_sub}
\text{CVaR}_{\alpha}(\sigma(\bar x))=\max \Big \{ \eta - \frac{1}{\alpha} \sum_{i \in [N]} p_i w_i : w_i \ge \eta - \sigma_i(\bar x), \quad \forall i \in [N], w \in \mathbb R^N_+, \eta \in \mathbb R \Big \},	
\end{equation} 
where  $[N]=\{1,\dots,N\}$ represents the set of the first $N$ positive integers, $\sigma_i(\bar x)=\sigma(x)(\omega_i)$, and $\sigma_q(\bar x)=\text{VaR}_{\alpha}(\sigma(\bar x))$ for at least one $\omega_q \in \Omega$. Therefore, when an oracle for evaluating CVaR is not available, we can take a sampling-based approach. 
To this end, in \cite{WuThesis}, we apply the stochastic submodular optimization methods of \cite{first2016} to problem \eqref{model:max_cvar} based on the CVaR definition \eqref{model:cvar_sub}. The resulting solutions are then tested  out-of-sample  and statistical performance guarantees are provided.

 In this paper, our  goal is to give (near-)optimal solutions for RASM without sampling. We start by considering high-quality (ideally optimal) feasible solutions to RASM under a true (non-trivial) distribution of the uncertain parameters assuming that there is an efficient oracle to evaluate the true value of CVaR of the random function at a given risk level. To solve RASM, we propose a decomposition method with various classes of valid inequalities. We demonstrate our proposed methods on a risk-averse set covering problem (RASC) that admits an efficient CVaR oracle in our computational study.

\section{RASM with an Exact CVaR Oracle} \label{sec:RASM-exact}

In this section, we assume that for a given incumbent solution $\bar x \in \mathcal X$, we have an efficient oracle that computes $\text{CVaR}_{\alpha}(\sigma(\bar x))$ exactly. Under this assumption, we solve problem \eqref{model:max_cvar} without sampling by using a two-stage optimization model and an exact algorithm with various valid inequalities. In the proposed method, we solve a relaxed master problem (RMP) at any iteration  in the form
\begin{equation}\label{model:max_cvar_master_2}
	\max \{ \psi:(x,\psi) \in \mathcal{C},x \in \mathcal{X}, \psi \in \mathbb R\}, 
\end{equation}
where  $\psi$ is a variable that is an upper bounding approximation of $\text{CVaR}_{\alpha}(\sigma(x))$ for a solution $x \in \mathcal{X}$, and $ \mathcal{C}$ is a set of optimality cuts to be defined later. It is an approximation in that not all necessary optimality cuts may have been generated at an intermediate iteration.  A \emph{full} master problem includes all  optimality cuts in  $ \mathcal{C}$ such that for any $x\in \mathcal X$, we have $\psi\le \text{CVaR}_{\alpha}(\sigma (x))$. Therefore, solving the full master problem is equivalent to solving the original problem \eqref{model:max_cvar}.
In Algorithm \ref{alg:DCG_2}, we propose a delayed constraint generation algorithm for solving problem \eqref{model:max_cvar} using RMP \eqref{model:max_cvar_master_2}. The algorithm starts with a set of optimality cuts $\mathcal{C}$ (could be empty). In the while loop, we solve RMP \eqref{model:max_cvar_master_2} and obtain an incumbent solution (Line \ref{dcg2_master}). Based on the incumbent solution, we add an optimality cut to RMP \eqref{model:max_cvar_master_2} (Line \ref{dcg2_add_cut}). In this algorithm, $\epsilon$ is
a user-defined optimality tolerance. Let UB be the upper bound obtained from the optimal objective value of RMP \eqref{model:max_cvar_master_2} at each iteration. Let LB  be the lower bound equal to $\text{CVaR}_\alpha(\sigma(\bar x))$, obtained from calling the CVaR oracle with the given incumbent solution $\bar x \in \mathcal X$ as input. If the optimality gap is below $\epsilon$, then we terminate the algorithm and return the near-optimal solution.   

\begin{algorithm}[htb]\label{alg:DCG_2}
	\SetAlgoLined
	Start with  an initial set of optimality cuts in $\mathcal{C}$ (could be empty), UB$ = \infty$ and LB$ = -\infty$\;
	
	\While{UB-LB $>
	 \epsilon$}
	{\do 	
		Solve RMP \eqref{model:max_cvar_master_2} and obtain $(\bar \psi, \bar x)$. \; \label{dcg2_master}		
		UB $ \leftarrow$ the optimal objective value of RMP \eqref{model:max_cvar_master_2}, LB$ \leftarrow \text{CVaR}_\alpha(\sigma(\bar x))$\; 
		Add an optimality cut to $\mathcal {C}$ \; \label{dcg2_add_cut}		 
	}				
	Output $\bar x$ as the optimal solution.
	\caption{A Delayed Constraint Generation Algorithm with a CVaR oracle}
\end{algorithm}

As we mentioned earlier,  $\text{CVaR}_{\alpha}(\sigma(x))$ is not submodular in $x$ even if $\sigma(x)$ is submodular \cite{RiskSub2015}.  Hence, there is no direct way use the submodular inequality \eqref{eq:submod} as a class of optimality cuts in $ \mathcal{C}$. Therefore,  in this section, we propose new optimality cuts that are valid for \eqref{model:max_cvar_master_2}. Throughout, we let $\mathbf {e}_{j}$ be a unit vector of dimension $|V|$ whose $j$th component is 1, and let $\mathbf{1}$ be a $|V|$-dimensional vector with all entries equal to 1. For a given $\bar x\in \mathcal X$,  we first propose an optimality cut given by
\begin{equation}\label{eq:new_cut}
\psi \le \text{CVaR}_{\alpha}(\sigma(\bar x)) +\sum_{j\in V\setminus \bar X} \big (\text{CVaR}_{1}(\sigma(\bar x+\mathbf {e}_{j}))-\text{CVaR}_{\alpha}(\sigma(\bar x))\big) x_j.
\end{equation} 
Before  formally proving the validity of inequality \eqref{eq:new_cut}, a few remarks are in order. It may be tempting to think that inequality \eqref{eq:new_cut} is in the form of a submodular inequality \eqref{eq:submod}. However, we highlight that the coefficients $ \big (\text{CVaR}_{1}(\sigma(\bar x+\mathbf {e}_{j}))-\text{CVaR}_{\alpha}(\sigma(\bar x))\big) $ for $j\in V\setminus \bar X$ are different from their submodular counterparts  $ \big (\text{CVaR}_{\alpha}(\sigma(\bar x+\mathbf {e}_{j}))-\text{CVaR}_{\alpha}(\sigma(\bar x))\big) $ for $j\in V\setminus \bar X$  and
because $\text{CVaR}_{\alpha}(\sigma(\cdot))$ is not submodular,  the latter coefficients are not valid. To provide some intuition for  the validity of the coefficients of the variables $x_j, j\in V\setminus \bar X$ in inequality \eqref{eq:new_cut}, we consider 
the upper bound on $\psi$ provided by the right-hand side of the inequality (recall that $\psi\le \text{CVaR}_{\alpha}(\sigma(x))$  for all $x\in \mathcal X$). First, observe that for $x=\bar x$, inequality \eqref{eq:new_cut} holds trivially, because $x_j=0$ for $j\in V\setminus \bar X$. Now consider the point $x=\bar x+\mathbf {e}_{j}$ for some $j\in V\setminus \bar X$. Then the right-hand side of inequality \eqref{eq:new_cut} is  $\text{CVaR}_{1}(\sigma(\bar x+\mathbf {e}_{j}))$, which is a valid upper bound on $\psi$ (because $\psi\le \text{CVaR}_{\alpha}(\sigma(\bar x+\mathbf {e}_{j}))\le \text{CVaR}_{1}(\sigma(\bar x+\mathbf {e}_{j}))$ from Observation \ref{prop:CVaR_sub} (ii)). The validity for other $x$ is proven by exploiting the properties  of $\text{CVaR}_{\alpha}(\sigma(x)) $ given in Observation \ref{prop:CVaR_sub} as we show next.

\begin{proposition}\label{prop:valid_new_cut1}
	Inequality \eqref{eq:new_cut} for a given $\bar x \in \mathbb {B}^n$   is valid for RMP \eqref{model:max_cvar_master_2}.
\end{proposition}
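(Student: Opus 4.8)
The plan is to prove the (apparently stronger) statement that $\text{CVaR}_{\alpha}(\sigma(x))$ is at most the right-hand side of \eqref{eq:new_cut} for \emph{every} $x\in\mathbb{B}^n$. This suffices: it guarantees that the point $(x^{*},\text{CVaR}_{\alpha}(\sigma(x^{*})))$ associated with an optimal solution $x^{*}$ of \eqref{model:max_cvar} (and, more generally, every $(x,\text{CVaR}_{\alpha}(\sigma(x)))$ with $x\in\mathcal X$) satisfies \eqref{eq:new_cut}, so the inequality does not remove any feasible solution of the full master problem and is therefore a valid optimality cut for RMP \eqref{model:max_cvar_master_2}. I would then split the argument according to the value of $s:=\sum_{j\in V\setminus\bar X}x_{j}\in\{0,1,2,\dots\}$, i.e., according to whether $x$ activates any index outside $\bar X$.

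In the case $s=0$ we have $x_{j}=0$ for all $j\in V\setminus\bar X$, so $X\subseteq\bar X$ and the right-hand side of \eqref{eq:new_cut} collapses to $\text{CVaR}_{\alpha}(\sigma(\bar x))$; it therefore suffices to show $\text{CVaR}_{\alpha}(\sigma(x))\le\text{CVaR}_{\alpha}(\sigma(\bar x))$. Since $\sigma(\cdot)(\omega)$ is nondecreasing for each $\omega$, we have $\sigma(x)(\omega)\le\sigma(\bar x)(\omega)$ pointwise, hence $[\eta-\sigma(\bar x)]_{+}\le[\eta-\sigma(x)]_{+}$ for every fixed $\eta$; plugging this into the maximization representation \eqref{model:cvar_def} and taking the maximum over $\eta$ yields the claim. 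This step is simply the monotonicity of CVaR.

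In the case $s\ge 1$ I would chain three bounds. First, by Observation \ref{prop:CVaR_sub}(ii) (with $\alpha\le 1$) together with Observation \ref{prop:CVaR_sub}(i), $\text{CVaR}_{\alpha}(\sigma(x))\le\text{CVaR}_{1}(\sigma(x))=\mathbb E[\sigma(x)]$. Second, because $\mathbb E[\sigma(\cdot)]$ is nondecreasing and submodular (Observation \ref{prop:CVaR_sub}(i)), applying the submodular inequality \eqref{eq:submod} to $\mathbb E[\sigma(\cdot)]$ with $S=\bar X$ gives
\[
\mathbb E[\sigma(x)]\le\mathbb E[\sigma(\bar x)]+\sum_{j\in V\setminus\bar X}\bigl(\mathbb E[\sigma(\bar x+\mathbf{e}_{j})]-\mathbb E[\sigma(\bar x)]\bigr)x_{j}.
\]
Third, using $\mathbb E[\sigma(\bar x+\mathbf{e}_{j})]=\text{CVaR}_{1}(\sigma(\bar x+\mathbf{e}_{j}))$, a direct computation shows that this last bound minus the right-hand side of \eqref{eq:new_cut} equals $\bigl(\mathbb E[\sigma(\bar x)]-\text{CVaR}_{\alpha}(\sigma(\bar x))\bigr)(1-s)$, which is $\le 0$ since $\mathbb E[\sigma(\bar x)]-\text{CVaR}_{\alpha}(\sigma(\bar x))\ge 0$ (Observation \ref{prop:CVaR_sub}) and $1-s\le 0$. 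Concatenating the three displays proves \eqref{eq:new_cut} in this case.

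The only delicate point — and the reason this is not a one-line application of \eqref{eq:submod} to $\mathbb E[\sigma(\cdot)]$ — is the mismatch between the intercept $\text{CVaR}_{\alpha}(\sigma(\bar x))$ appearing in \eqref{eq:new_cut} and the larger quantity $\mathbb E[\sigma(\bar x)]$ produced by \eqref{eq:submod}. Replacing $\mathbb E[\sigma(\bar x)]$ by the smaller $\text{CVaR}_{\alpha}(\sigma(\bar x))$ is compensated by the slack $s\ge 1$ in the linear part of the cut, but this compensation fails exactly when $s=0$, which is why the $s=0$ case has to be handled separately through monotonicity rather than through \eqref{eq:submod}. Everything else is routine: the $s=0$ branch is pure monotonicity of CVaR, and the $s\ge 1$ branch is the three-step chain above, so I expect no further obstacles.
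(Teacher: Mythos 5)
Your proof is correct and follows essentially the same route as the paper's: the same case split ($s=0$ handled by monotonicity of CVaR, $s\ge 1$ by chaining $\text{CVaR}_{\alpha}\le\text{CVaR}_{1}=\mathbb E[\sigma(\cdot)]$ with the submodular inequality \eqref{eq:submod} applied to $\text{CVaR}_{1}(\sigma(\cdot))$ at $S=\bar X$, then absorbing the gap $\text{CVaR}_{1}(\sigma(\bar x))-\text{CVaR}_{\alpha}(\sigma(\bar x))$ using the slack from an activated index outside $\bar X$). The only cosmetic difference is that the paper singles out one index $j'$ with $\hat x_{j'}=1$ whereas you account for the full slack $(1-s)\le 0$ at once.
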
    
\begin{proof}	
	Consider a  feasible point $( \hat \psi,  \hat  x)$ to the full master problem, in other words, let $\hat  x\in\mathcal X$ such that $\hat \psi \le  \text{CVaR}_{\alpha}(\sigma (\hat x))$. We show that $( \hat \psi,  \hat  x)$ satisfies  inequality \eqref{eq:new_cut} written for any $\bar x \in \mathcal X$. Let  $ \hat X = \{i \in V:{ \hat x_i} = 1\}$. From the definition of CVaR in \eqref{model:cvar_def}, it follows that because $\sigma(x)(\omega)$ is nondecreasing in $x$ for all $\omega \in \bar \Omega$,  $\text{CVaR}_{\alpha}(\sigma (x))$ is also nondecreasing in $x$. For the case that $\hat X \subseteq \bar X$, since $\text{CVaR}_{\alpha}(\sigma (x))$ is a monotonically nondecreasing function in $x$ and $\hat x_j = 0$ for all $j \in V\setminus \bar X$, we have $\hat  \psi  \le \text{CVaR}_{\alpha}(\sigma (\hat x))  \le \text{CVaR}_{\alpha}(\sigma (\bar x))$, which shows that inequality \eqref{eq:new_cut} is valid for $\hat X \subseteq \bar X$.  
	For the case that $\hat X \setminus \bar X \neq \emptyset$, we select an arbitrary  $j' \in \hat X \setminus \bar X$, where $\hat x_{j'} = 1$. Then 
		\begin{subequations}
			\begin{align}
			\hat  \psi & \le \text{CVaR}_{\alpha}(\sigma (\hat x)) \nonumber \\
			& \le \text{CVaR}_{1}(\sigma (\hat x)) \label{ceq:2}\\
			& \le  \text{CVaR}_{1}(\sigma (\bar x)) + \sum_{j \in V \setminus \bar X } [\text{CVaR}_{1}(\sigma (\bar x +\mathbf {e}_{j}))-\text{CVaR}_{1}(\sigma (\bar x))] \hat x_j\label{ceq:3}\\			
			& =  \sum_{j \in V \setminus \bar X } \text{CVaR}_{1}(\sigma (\bar x +\mathbf {e}_{j})) \hat x_j-  \sum_{j \in V \setminus (\bar X \cup \{j'\})} \text{CVaR}_{1}(\sigma (\bar x))\hat x_j \label{ceq:4}\\
			& \le  \sum_{j \in V \setminus \bar X } \text{CVaR}_{1}(\sigma (\bar x +\mathbf {e}_{j})) \hat x_j-  \sum_{j \in V \setminus (\bar X \cup \{j'\})} \text{CVaR}_{\alpha}(\sigma (\bar x))\hat x_j \label{ceq:5}\\
			& = \bigg ( \sum_{j \in V \setminus \bar X } \text{CVaR}_{1}(\sigma (\bar x +\mathbf {e}_{j})) \hat x_j-  \sum_{j \in V \setminus \bar X} \text{CVaR}_{\alpha}(\sigma (\bar x))\hat x_j \bigg ) + \text{CVaR}_{\alpha}(\sigma (\bar x))\hat x_{j'}\label{ceq:6}\\	
			& =  \text{CVaR}_{\alpha}(\sigma(\bar x)) +\sum_{j\in V\setminus \bar X} (\text{CVaR}_{1}(\sigma(\bar x+\mathbf {e}_{j}))-\text{CVaR}_{\alpha}(\sigma(\bar x))) \hat x_j. \label{ceq:7}
			\end{align}
		\end{subequations} 
		Inequality \eqref{ceq:2} follows from Observation \ref{prop:CVaR_sub} (ii) that $\text{CVaR}_{\alpha}(\sigma(x))$   is  nondecreasing  in $\alpha$.  
		Inequality \eqref{ceq:3} follows from the submodular inequality \eqref{eq:submod} written for the monotone submodular function  $\text{CVaR}_{1}(\sigma(x)) $   (see Observation \ref{prop:CVaR_sub} (i)) and for $S=\bar X$ evaluated at the point $x=\hat x$. Arranging terms in inequality \eqref{ceq:3} and recalling that $\hat x_{j'} = 1$, we obtain equality \eqref{ceq:4}.
		Inequality \eqref{ceq:5} follows  from Observation \ref{prop:CVaR_sub} (ii), this time observing that $-\text{CVaR}_{\alpha}(\sigma(x))$   is  nonincreasing  in $\alpha$. To obtain equality \eqref{ceq:6}, we add $\text{CVaR}_{\alpha}(\sigma (\bar x))\hat x_{j'}-\text{CVaR}_{\alpha}(\sigma (\bar x))\hat x_{j'}$ to inequality \eqref{ceq:5} and reorganize the terms. Finally, equality \eqref{ceq:7} follows from $\hat x_{j'} = 1$. This completes the proof. 
\end{proof}

Next, we introduce a class of valid inequalities obtained by a sequential lifting procedure \cite[][Proposition 1.1 in Section II.2.1]{NW88}. Given  $\bar x\in \mathcal X$, which is a characteristic vector of the set $\bar X$, consider a restriction with $x_j=0$ for $j\in V\setminus \bar X$. For this restriction, we know that the \emph{base inequality}  $\psi\le  \text{CVaR}_{\alpha}(\sigma(\bar x))$ is valid, because for any $x$ satisfying this restriction, we have $X\subseteq \bar X$, and  $\psi= \text{CVaR}_{\alpha}(\sigma( x)) \le  \text{CVaR}_{\alpha}(\sigma(\bar x))$ due the property that $\text{CVaR}_{\alpha}(\sigma(x))$ is monotonically nondecreasing in $x$.  However, this base inequality is not valid when the restriction is lifted. To obtain a valid inequality, we sequentially lift the base inequality with the variables $x_j, j\in V\setminus \hat X$.
Let $j_1,\dots,j_{r}$ be an ordering of the elements in $V \setminus \bar X$, where $r = |V \setminus \bar X|$. Sequential lifting following this order produces a valid inequality
\begin{equation}\label{eq:new_cut_uplift}
\psi \le  \text{CVaR}_{\alpha}(\sigma(\bar x))  +\sum_{i = 1}^{r}  \delta_{j_i}(\bar x) x_{j_i},
\end{equation} 
where $\delta_{j_t}(\bar x)$ for $t=1,\dots,r$ is an \emph{exact} lifting function given by the  $t$-th lifting problem 
\begin{subequations}
	\label{eq:uplift}
	\begin{align}
	\delta_{j_t}(\bar x) =   -\text{CVaR}_{\alpha}(\sigma(\bar x)) +
 \max~~&	\text{CVaR}_{\alpha}(\sigma(x)) - \sum_{i = 1}^{t-1} \delta_{j_i}(\bar x) x_{j_i}  \label{eq:uplift-obj} \\
	\text{s.t.}~~ 		
	& x_{j_t} = 1 \label{eq:uplift_1}\\
	& x_{j_i} = 0 , \quad i=t+1,\dots, r \label{eq:uplift_3}\\				 
	& x \in \mathcal{X}. \label{eq:uplift_5}
	\end{align}
\end{subequations}
In \eqref{eq:uplift-obj}, we use the convention that for $t=1$, the term $\sum_{i=1}^{t-1} (\cdot)=0$. Note that in the $t$-th lifting problem, to obtain the coefficient of the variable $x_{j_t}$ in inequality \eqref{eq:new_cut_uplift}, we let $x_{j_t}=1$ in constraint \eqref{eq:uplift_1}, we remove the restriction on the variables preceding this variable in the lifting sequence, while keeping the restriction that the variables following $x_{j_t}$ in the lifting sequence are fixed to zero in constraint \eqref{eq:uplift_3}.  
The exact lifting problem is hard to solve since it is related to the submodular maximization problem \eqref{model:max_cvar}. Instead of solving problem \eqref{eq:uplift} exactly, we propose to solve a relaxation that provides an upper bound for the lifting coefficients. Then, we can obtain another valid inequality by using the upper bounds on the coefficients instead of the exact coefficients. We describe this approach next.  

Given  $\bar x\in \mathcal X$, we define $\bar \delta_{j_t}(\bar x)$ as an  upper bound of $ \delta_{j_t}(\bar x)$ for $t=1,\dots,r$, where $\bar \delta_{j_t}(\bar x)$ is given by solving  the following relaxation of the exact lifting problem \eqref{eq:uplift}:
\begin{equation}\label{eq:relax_uplift}
	\bar \delta_{j_t}(\bar x) = \max\{\text{CVaR}_{\alpha}(\sigma(x)),\eqref{eq:uplift_1}, \eqref{eq:uplift_3},x \in \mathbb B^n\} -  \text{CVaR}_{\alpha}(\sigma(\bar x));
\end{equation}
here we relax constraints \eqref{eq:uplift_5} and remove the term $-\sum_{i = 1}^{t-1} \delta_{j_i}(\bar x) x_{j_i}$ from the objective function in problem \eqref{eq:uplift}. This is a valid relaxation, because $\delta_{j_i}(x)\ge 0$  when $\text{CVaR}_{\alpha}(\sigma(x))$ is monotonically nondecreasing in $x$. Furthermore,  because $x_{j_t} = 1$ and $x_{j_i} = 0$ for $i=t+1,\dots, r$, the feasible solution with $x^*_{j_i} = 1$ for $i=1,\dots, t$ has the largest $\text{CVaR}_{\alpha}(\sigma(x^*))$ in the objective function of \eqref{eq:relax_uplift}. Thus, the optimal solution of the relaxed lifting problem \eqref{eq:relax_uplift} is given by $\bar x^{j_t} = \mathbf{1} -\sum_{i=t+1}^{r}\mathbf{e}_{j_i}$, where we can use an efficient  oracle for $\text{CVaR}_{\alpha}(\sigma(x))$ to obtain the optimal value of $\bar \delta_{j_t}(\bar x)$ efficiently. 
\begin{proposition}\label{prop:valid_relax_upliftcut}
	Given $\bar x \in 
	\mathcal X$, its support $\bar X$ and  an ordering of $V \setminus \bar X$ given by $\{j_1,\ldots,j_r\}$, inequality 
	\begin{equation}\label{eq:relax_cut_uplift}
	\psi \le  \text{CVaR}_{\alpha}(\sigma(\bar x))  +\sum_{i = 1}^{r}  \bar \delta_{j_i}(\bar x) x_{j_i},
	\end{equation} 
	where $\bar \delta_{j_i}(\bar x) = \text{CVaR}_{\alpha}(\sigma(\bar x^{j_i}))- \text{CVaR}_{\alpha}(\sigma(\bar x)) $,
	is valid for RMP \eqref{model:max_cvar_master_2}.
\end{proposition}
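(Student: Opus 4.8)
The plan is to argue directly, mirroring the case analysis in the proof of Proposition~\ref{prop:valid_new_cut1}, using only two ingredients: that $\text{CVaR}_{\alpha}(\sigma(\cdot))$ is nondecreasing in $x$ (which holds by \eqref{model:cvar_def} because $\sigma(x)(\omega)$ is nondecreasing for every $\omega$), and the explicit form of the relaxed-lifting maximizer, namely that $\bar x^{j_t}=\mathbf{1}-\sum_{i=t+1}^{r}\mathbf{e}_{j_i}$ is precisely the characteristic vector of $\bar X\cup\{j_1,\dots,j_t\}$; in particular $\bar x^{j_t}\ge\bar x$ coordinatewise, so $\bar\delta_{j_t}(\bar x)=\text{CVaR}_{\alpha}(\sigma(\bar x^{j_t}))-\text{CVaR}_{\alpha}(\sigma(\bar x))\ge 0$. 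I would fix a point $(\hat\psi,\hat x)$ feasible to the full master problem, i.e.\ $\hat x\in\mathcal X$ with $\hat\psi\le\text{CVaR}_{\alpha}(\sigma(\hat x))$, and show it satisfies \eqref{eq:relax_cut_uplift}.

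For the case $\hat X\subseteq\bar X$, every $\hat x_{j_i}=0$, so the right-hand side of \eqref{eq:relax_cut_uplift} equals $\text{CVaR}_{\alpha}(\sigma(\bar x))$, and monotonicity gives $\hat\psi\le\text{CVaR}_{\alpha}(\sigma(\hat x))\le\text{CVaR}_{\alpha}(\sigma(\bar x))$. For the case $\hat X\setminus\bar X\neq\emptyset$, let $j_k$ be the element of $\hat X\setminus\bar X$ appearing latest in the lifting order, i.e.\ $k=\max\{i:\hat x_{j_i}=1\}$. Then $\hat X\subseteq\bar X\cup\{j_1,\dots,j_k\}$, so $\hat x\le\bar x^{j_k}$ coordinatewise and $\text{CVaR}_{\alpha}(\sigma(\hat x))\le\text{CVaR}_{\alpha}(\sigma(\bar x^{j_k}))$ by monotonicity; moreover, since all coefficients $\bar\delta_{j_i}(\bar x)$ are nonnegative and $\hat x_{j_k}=1$, the sum $\sum_{i=1}^{r}\bar\delta_{j_i}(\bar x)\hat x_{j_i}$ is at least $\bar\delta_{j_k}(\bar x)=\text{CVaR}_{\alpha}(\sigma(\bar x^{j_k}))-\text{CVaR}_{\alpha}(\sigma(\bar x))$. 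Adding $\text{CVaR}_{\alpha}(\sigma(\bar x))$, the right-hand side of \eqref{eq:relax_cut_uplift} is at least $\text{CVaR}_{\alpha}(\sigma(\bar x^{j_k}))\ge\text{CVaR}_{\alpha}(\sigma(\hat x))\ge\hat\psi$, as desired.

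An alternative route is to deduce \eqref{eq:relax_cut_uplift} from the already-valid exactly lifted inequality \eqref{eq:new_cut_uplift}: one checks that \eqref{eq:relax_uplift} is a relaxation of the exact lifting problem \eqref{eq:uplift} (the feasible set is enlarged from $\mathcal X$ to $\mathbb B^n$ and the nonpositive term $-\sum_{i<t}\delta_{j_i}(\bar x)x_{j_i}$ is dropped from the objective), whence $\bar\delta_{j_i}(\bar x)\ge\delta_{j_i}(\bar x)$, and then — because all $\hat x_{j_i}\ge 0$ — replacing the exact coefficients by these larger ones keeps the inequality valid. I expect the only real friction in that route to be justifying $\delta_{j_i}(\bar x)\ge 0$ (needed both for the relaxation step and implicitly through feasibility of the exact lifting subproblems for general $\mathcal X$), which is why I would favor the self-contained direct argument above; there the one point deserving care is simply the combinatorial bookkeeping that the latest element $j_k$ of $\hat X\setminus\bar X$ satisfies $\hat X\subseteq\bar X\cup\{j_1,\dots,j_k\}$, after which the chain of monotonicity inequalities closes immediately. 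Note that, unlike Proposition~\ref{prop:valid_new_cut1}, this argument does not invoke submodularity of $\text{CVaR}_{1}(\sigma(\cdot))$ at all, consistent with \eqref{eq:relax_cut_uplift} being a weaker (but oracle-computable) cut than \eqref{eq:new_cut_uplift}.
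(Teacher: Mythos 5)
Your direct argument is correct and complete, but it is genuinely different from the paper's proof, which is the two-line version of your ``alternative route'': the paper simply cites the validity of the exactly lifted inequality \eqref{eq:new_cut_uplift} (obtained from the sequential lifting framework) and then observes that $\bar \delta_{j_i}(\bar x)\ge \delta_{j_i}(\bar x)$ because \eqref{eq:relax_uplift} is a relaxation of \eqref{eq:uplift}, so replacing the exact coefficients by larger ones preserves validity. Your self-contained argument --- identifying $\bar x^{j_k}$ as the characteristic vector of $\bar X\cup\{j_1,\dots,j_k\}$ for $k=\max\{i:\hat x_{j_i}=1\}$, noting $\hat x\le \bar x^{j_k}$, and closing with monotonicity of $\text{CVaR}_{\alpha}(\sigma(\cdot))$ plus nonnegativity of the $\bar\delta$'s --- buys something real: it does not rest on the validity of \eqref{eq:new_cut_uplift}, and hence sidesteps exactly the issues you flag (whether the exact lifting subproblems are feasible for general $\mathcal X$ and whether $\delta_{j_i}(\bar x)\ge 0$, both of which the paper's relaxation argument implicitly needs). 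It also makes transparent that the cut requires only monotonicity, not submodularity. The paper's route, in exchange, is shorter and situates the inequality within the standard lifting machinery, which is what motivates the construction in the first place. Both proofs are valid; yours is the more careful one.
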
    
\begin{proof}
	Consider a feasible point $( \hat \psi,  \hat  x)$. We show that $( \hat \psi,  \hat  x)$ satisfies  inequality \eqref{eq:relax_cut_uplift}   written for  $\bar x \in \mathbb {B}^n$.
		\begin{subequations}
	\begin{align}
		\psi& \le  \text{CVaR}_{\alpha}(\sigma(\bar x))  +\sum_{i = 1}^{r}  \delta_{j_i}(\bar x) \hat x_{j_i}\label{eq:relax_cut_uplift1}\\
		& \le  \text{CVaR}_{\alpha}(\sigma(\bar x))  +\sum_{i = 1}^{r}  \bar \delta_{j_i}(\bar x) \hat x_{j_i}.\label{eq:relax_cut_uplift2}
	\end{align}
\end{subequations} 
	Inequality \eqref{eq:relax_cut_uplift1} follows from the valid inequality \eqref{eq:new_cut_uplift}. Inequality \eqref{eq:relax_cut_uplift2} follows from $\bar \delta_{j_i}(\bar x) \ge \delta_{j_i}(\bar x)$ for $i=1,\dots,r$ since problem \eqref{eq:relax_uplift} is a relaxation of problem \eqref{eq:uplift}. This completes the proof.		
\end{proof} 
While inequality \eqref{eq:relax_cut_uplift} is derived by approximate lifting, next  we provide some intuition on  the validity of the coefficients of the variables $x_j, j\in V\setminus \bar X$ in inequality \eqref{eq:relax_cut_uplift}. Consider 
the upper bound on $\psi\le\text{CVaR}_{\alpha}(\sigma ( x))$ for some $x\in \mathcal X$ provided by the right-hand side of the inequality. First, observe that for $x=\bar x$, inequality \eqref{eq:relax_cut_uplift} holds trivially, because $x_j=0$ for $j\in V\setminus \bar X$. Now consider the point $x=\bar x+\mathbf {e}_{j_t}$, for some $t\in\{1,\dots,r\}$. Then the right-hand side of inequality \eqref{eq:relax_cut_uplift} is  $\text{CVaR}_{\alpha}(\sigma(\bar x^{j_t}))$, which is a valid upper bound on $\psi=\text{CVaR}_{\alpha}(\sigma(\bar x+\mathbf {e}_{j_t}))$ from Observation \ref{prop:CVaR_sub} (ii). The approximate lifting argument establishes that the inequality is valid for other choices of $x$ as well, because the coefficients of the variables are no smaller than those obtained from an exact lifting problem. 

In Algorithm \ref{alg:greedy_uplift},   we propose a greedy method that generates an inequality \eqref{eq:relax_cut_uplift} given an incumbent $\bar x$. In the for loop, Line \ref{alg:gu_second_cond} determines the entry $j_i$ by choosing the candidate index $s$ for which $\bar \delta_{j_i = s}(\bar x)$ attains its smallest value, where the candidate $s$ has not been chosen previously. 
\begin{algorithm}[htb]\label{alg:greedy_uplift}
	\SetAlgoLined
 \label{alg:gu1} 
	$S \leftarrow V \setminus \bar X$ \;
	\For{$i =  1$ to $r$}
	{\do 
		\;			 			
		$j_i \leftarrow \arg \min_{s \in S} \bar \delta_{j_i = s}(\bar x)$ \; \label{alg:gu_second_cond}
		$x^* \leftarrow \mathbf{1} -\sum_{i=t+1}^{r}\mathbf{e}_{j_i}$\;
		$\bar \delta_{j_i}(\bar x) \leftarrow \bar \delta_{j_i}(x^*)$ \;
		$S \leftarrow S \setminus \{j_i\}$ \;		
	}				
	Output an inequality \eqref{eq:relax_cut_uplift} as the solution.
	\caption{GreedyUp-lifting($\alpha,\bar x$)}
\end{algorithm}

Finally, we show the correctness of Algorithm \ref{alg:DCG_2} based on the inequalities \eqref{eq:new_cut}  or  \eqref{eq:relax_cut_uplift}.

\begin{proposition}\label{prop:valid_algo1}
Algorithm \ref{alg:DCG_2} with optimality cuts  \eqref{eq:new_cut}  and/or  \eqref{eq:relax_cut_uplift} finitely converges to an optimal solution.
\end{proposition}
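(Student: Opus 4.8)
The plan is to run the standard finite-convergence argument for delayed constraint generation, resting on three facts: the generated cuts keep the master problem a valid relaxation, each incumbent is feasible and hence gives a lower bound, and the feasible set is finite. First I would record that the procedure maintains a valid relaxation: every cut appended to $\mathcal C$ in Line~\ref{dcg2_add_cut} is of the form \eqref{eq:new_cut} or \eqref{eq:relax_cut_uplift}, and by Propositions~\ref{prop:valid_new_cut1} and~\ref{prop:valid_relax_upliftcut} each such inequality is satisfied by $\big(\text{CVaR}_{\alpha}(\sigma(x)),x\big)$ for every $x\in\mathcal X$; hence at each iteration RMP~\eqref{model:max_cvar_master_2} is a relaxation of the full master problem, so its optimal value (assigned to UB) satisfies $\text{UB}\ge\OPT:=\max_{x\in\mathcal X}\text{CVaR}_{\alpha}(\sigma(x))$. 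Since LB is set to $\text{CVaR}_{\alpha}(\sigma(\bar x))$ for the feasible point $\bar x\in\mathcal X$ returned by the RMP, we also have $\text{LB}\le\OPT$, so $\text{LB}\le\OPT\le\text{UB}$ at every iteration; therefore, when the test $\text{UB}-\text{LB}\le\epsilon$ fires, the incumbent $\bar x$ is $\epsilon$-optimal (and exactly optimal if $\epsilon=0$). I would also note that the RMP is well posed: once $\mathcal C$ contains at least one cut of either type, its right-hand side is an affine function of the bounded binary vector $x$, so $\psi$ is bounded above over $\mathcal X$ and UB is finite from the first iteration on.

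Next I would establish the key self-cutting property: when a cut is generated at incumbent $\bar x$ with support $\bar X$, evaluating its right-hand side at $x=\bar x$ returns exactly $\text{CVaR}_{\alpha}(\sigma(\bar x))$, because every variable $x_j$, $j\in V\setminus\bar X$, that carries a nonzero coefficient equals $0$ at $\bar x$; hence after that iteration any RMP solution whose $x$-part equals $\bar x$ must have $\bar\psi\le\text{CVaR}_{\alpha}(\sigma(\bar x))$. Finiteness then follows by contradiction: since $\mathcal X\subseteq\mathbb{B}^n$ is finite, if Algorithm~\ref{alg:DCG_2} did not terminate it would return the same incumbent $\bar x$ at two iterations $k<l$; applying the self-cutting property at iteration $k$, at iteration $l$ we get $\text{UB}=\bar\psi\le\text{CVaR}_{\alpha}(\sigma(\bar x))=\text{LB}$, which combined with $\text{LB}\le\OPT\le\text{UB}$ forces $\text{UB}=\text{LB}$, so the stopping test fires at iteration $l$, a contradiction. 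Thus the incumbents produced before termination are pairwise distinct, the algorithm stops after at most $|\mathcal X|+1$ iterations, and the same chain of equalities shows $\text{LB}=\OPT$ at termination when $\epsilon=0$ (so $\bar x$ is optimal), and $\bar x$ is $\epsilon$-optimal otherwise.

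I do not expect a genuine analytic obstacle here; the only points requiring care are bookkeeping. One is the "and/or" in the statement: mixing the two cut families is harmless precisely because each family is individually valid for the full master problem, so an arbitrary mixture still guarantees $\text{UB}\ge\OPT$. The other is that UB and LB are overwritten (not accumulated as running best bounds) in Algorithm~\ref{alg:DCG_2}; this does not affect the argument, because the contradiction is extracted at the single iteration $l$ where an incumbent repeats and does not rely on any monotonicity of UB or LB across iterations. If one wishes, one can additionally observe that UB is non-increasing (cuts are only added), which sharpens but is not needed for the conclusion.
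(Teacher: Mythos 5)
Your proof is correct and follows essentially the same route as the paper: validity of the cuts via Propositions~\ref{prop:valid_new_cut1} and~\ref{prop:valid_relax_upliftcut}, tightness of each cut at its generating incumbent, and finiteness of $\mathcal X$. The paper's own proof is terser---it asserts that the full master problem (all cuts for all $\bar X\subseteq V$) is equivalent to \eqref{model:max_cvar} and appeals to finiteness of the solution and cut families---whereas you make explicit the self-cutting/repeated-incumbent argument that this equivalence and termination implicitly rest on; this is a useful elaboration, not a different approach.
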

\begin{proof}
From Proposition \ref{prop:valid_new_cut1} and \ref{prop:valid_relax_upliftcut}, for all $\bar X \subseteq V$ and $x \in \mathcal X$, we have
\begin{equation*}
\psi \le \text{CVaR}_{\alpha}(\sigma(\bar x)) \le \text{CVaR}_{\alpha}(\sigma( x)) +\sum_{j\in V\setminus \bar X} \big (\text{CVaR}_{1}(\sigma(\bar x+\mathbf {e}_{j}))-\text{CVaR}_{\alpha}(\sigma(\bar x))\big) x_j,
\end{equation*} 
and
\begin{equation*}
\psi \le \text{CVaR}_{\alpha}(\sigma(\bar x)) \le  \text{CVaR}_{\alpha}(\sigma(x))  +\sum_{i = 1}^{r}  \bar \delta_{j_i}(\bar x) x_{j_i}.
\end{equation*} 
Hence, the following two linear integer programs,
\begin{equation*}
\max \{ \psi: \psi \le \text{CVaR}_{\alpha}(\sigma(x)) +\sum_{j\in V\setminus \bar X} \big (\text{CVaR}_{1}(\sigma(\bar x+\mathbf {e}_{j}))-\text{CVaR}_{\alpha}(\sigma(\bar x))\big) x_j \quad \forall \bar X \subseteq V, x \in \mathcal{X}, \psi \in \mathbb R\}
\end{equation*}
and
\begin{equation*}
\max \{ \psi: \psi \le  \text{CVaR}_{\alpha}(\sigma(x))  +\sum_{i = 1}^{r}  \bar \delta_{j_i}(\bar x) x_{j_i} \quad \forall \bar X \subseteq V, x \in \mathcal{X}, \psi \in \mathbb R\},
\end{equation*} 
 are equivalent to Problem \eqref{model:max_cvar}. Since the number of feasible solutions and the number of  inequalities  \eqref{eq:new_cut}  and  \eqref{eq:relax_cut_uplift} are finite, the result follows.
\end{proof}
Next we report our
computational experience with the proposed method.

\section{An Application: the Risk-Averse Set Covering Problem (RASC)} \label{RASC_sec}

We demonstrate the proposed methods on a variant of a risk-averse set covering problem (RASC). We represent RASC on a bipartite graph $G=(V_1 \cup V_2, E)$. There are two groups of nodes $V_1$ and $V_2$ in $G$, where all arcs in $E$ are from $V_1$ to $V_2$. Let $V_2:=\{1,\dots,m\}$ be a set of items. Let $S_j\subseteq V_2, j\in V_1:=\{1,\dots,n\}$ be a collection of $n$ subsets, where $\bigcup_{j=1}^n S_j=V_2$. 
There exists an arc $(i,j) \in E$  if  $j \in S_i$ for $i \in V_1$ representing the covering relationship. In  RASC, there is uncertainty on whether an arc appears in the graph. To formulate RASC, first consider a deterministic set covering problem with a feasibility set $\{x\in \mathbb{B}^{n} | \sum_{j\in V_1}  t_{ij} x_j \ge h_i, \quad \forall i \in V_2  \}$, where $h_i=1$ for all $i\in V_2$, and $ t_{ij}=1$ if $i \in S_j$; otherwise, $ t_{ij} = 0$ for all $i\in V_2\setminus S_j, j\in V_1$. We say that an item $i\in V_2$ is covered, if there exists $x_j=1$ for $j\in V_1$ and $t_{ij}=1$.  Now suppose that there is uncertainty on whether a chosen subset can cover an item, where constraint $t_{ij} x_j \ge h_i$ has random constraint coefficients $t_{ij}$ or  random right-hand side $h_i$ for $ i \in V_2, j \in V_1$. A related class of risk-averse problems, referred to as the probabilistic set covering problems,  consider a feasible set $ \{x\in \mathbb{B}^{n} | \mathbb P( \sum_{j\in V_1}  t_{ij} x_j \ge h_i, \quad \forall i \in V_2)\ge 1-\alpha \}$, where $\alpha$ is a given risk level and $t_{ij}$ and/or $h_i$ are random variables for $i\in V_2, j\in V_1$. Here the objective is to select a minimum cost selection of subsets such that the probability of covering all items is at least the risk threshold $1-\alpha$ (see, e.g., \cite{Beraldi2002b,Saxena2010,Fischetti2012,Ahmed2013}).  In contrast, in  this paper, we  assume that $h_i=1$ for all $i\in V_2$, and that there is uncertainty on $t_{ij}$ for all $i \in V_2$ and $j \in V_1$ and consider a different type of risk aversion, where we aim to choose at most $k$ subsets from the collection so that the CVaR of the number of covered items at a risk level $\alpha$ is maximized. 
 In this section, we consider an {\it independent probability coverage model}, where each node $j$ has an independent probability $a_{ij}$ of being covered by  node $i \in V_1$ for  $j\in S_i$, i.e., $\mathbb P(t_{ij}=1)=a_{ij}$.

Let $\sigma(x)$ be a random variable representing the number of covered items in $V_2$ for a given $x$, i.e., $\sigma(x):=|\{i\in V_2: \exists j\in V_1 \text{ with } x_j=1 \text{ and } t_{ij}=1\}|$. It is known that $\sigma(x)(\omega)$ is submodular for $\omega \in \Omega$ \cite[]{Wu2017}. Given an integer $k$ and $\alpha \in (0,1]$, problem \eqref{model:max_cvar} for RASC is 
\begin{equation}\label{model:maxcov_cvar}
\max\{\text{CVaR}_{\alpha}(\sigma(x)):\sum_{i \in V_1} x_i \le k,x\in\mathbb{B}^{n}\},	
\end{equation} 
which is in the form of \eqref{model:max_cvar}, where $\mathcal{X}$ is given by $\mathcal X= \{x\in\mathbb{B}^{n}: \sum_{i \in V_1} x_i \le k\}$. Next we propose an efficient CVaR oracle to evaluate the objective function in  problem \eqref{model:maxcov_cvar} for a given $x \in \mathcal{X}$ under the probability distribution of interest.
\begin{proposition} \label{prop:oracle}
 There exists a polyniamial-time oracle that computes the function $\text{CVaR}_{\alpha}(\sigma(x))$ for $x \in \mathcal{X}$ for RASC under the independent probability coverage model. 
\end{proposition}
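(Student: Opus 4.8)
The plan is to exploit the independence built into the \emph{independent probability coverage model}: for any fixed $x$, I will show that $\sigma(x)$ is a sum of independent Bernoulli random variables (a Poisson-binomial variable) whose parameters are cheap to compute, so its entire distribution can be tabulated in polynomial time, after which $\text{CVaR}_{\alpha}(\sigma(x))$ follows directly from definition \eqref{model:cvar_def}.

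\emph{Step 1: reduce to marginal coverage probabilities.} Fix $x\in\mathcal X$ with support $X=\{j\in V_1:x_j=1\}$, and for $i\in V_2$ let $Y_i$ be the indicator that item $i$ is covered, i.e.\ $Y_i=1$ iff $t_{ij}=1$ for some $j\in X$ with $i\in S_j$. Since the $t_{ij}$ are mutually independent with $\mathbb P(t_{ij}=1)=a_{ij}$, item $i$ is uncovered precisely when all relevant arcs fail, so
$$q_i(x):=\mathbb P(Y_i=1)=1-\prod_{j\in X:\,i\in S_j}(1-a_{ij}),$$
which is computed for all $i$ in $O(|E|)$ time. Moreover $Y_i$ is a function of the bits $\{t_{ij}:j\in X\}$ only, and these bit-sets are disjoint across distinct $i$; hence $\{Y_i\}_{i\in V_2}$ is mutually independent and $\sigma(x)=\sum_{i\in V_2}Y_i$ is Poisson-binomial on $\{0,1,\dots,m\}$.

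\emph{Step 2: tabulate the distribution and read off CVaR.} The mass function $p_\ell(x):=\mathbb P(\sigma(x)=\ell)$, $\ell=0,\dots,m$, is obtained by convolving the Bernoulli factors one at a time via the recursion $p^{(i)}_\ell=(1-q_i(x))\,p^{(i-1)}_\ell+q_i(x)\,p^{(i-1)}_{\ell-1}$ for $i=1,\dots,m$, starting from the unit mass at $0$; this costs $O(m^2)$ arithmetic operations. Given the PMF, the function $g(\eta):=\eta-\tfrac1\alpha\,\mathbb E\big([\eta-\sigma(x)]_+\big)=\eta-\tfrac1\alpha\sum_{\ell=0}^{m}p_\ell(x)\,[\eta-\ell]_+$ is concave and piecewise linear with breakpoints only at the integers $0,\dots,m$, so by \eqref{model:cvar_def}
$$\text{CVaR}_{\alpha}(\sigma(x))=\max_{\eta\in\mathbb R}g(\eta)=\max_{\ell\in\{0,\dots,m\}}g(\ell),$$
and each value $g(\ell)$ is evaluated in $O(m)$ time from the $p_\ell(x)$ (equivalently, one first locates $\text{VaR}_{\alpha}(\sigma(x))=\max\{\ell:\sum_{t\ge\ell}p_t(x)\ge 1-\alpha\}$, which attains the maximum). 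All steps run in time polynomial in $n$ and $m$, proving the claim.

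The part that needs care is the independence argument in Step 1 — that the coverage indicators $\{Y_i\}_{i\in V_2}$ are mutually independent — which rests on the modeling assumption that \emph{all} the $t_{ij}$ are independent, so the randomness driving different items never overlaps; once this is in hand, the $O(m^2)$ Poisson-binomial convolution and the reduction of the $\eta$-maximization in \eqref{model:cvar_def} to a search over the integer support $\{0,\dots,m\}$ are routine, and no condition on $\alpha$ beyond $\alpha\in(0,1]$ is required.
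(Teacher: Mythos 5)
Your proof is correct and follows essentially the same route as the paper: both arguments observe that under the independent coverage model $\sigma(x)$ is a Poisson-binomial random variable, tabulate its mass function by the standard $O(nm+m^2)$ dynamic-programming convolution, and then read off $\text{CVaR}_\alpha$ from the resulting discrete distribution (the paper via a closed-form expression in terms of $\text{VaR}_\alpha$ and the lower tail, you via maximizing the Rockafellar--Uryasev objective over its integer breakpoints, which you correctly note coincide). Your Step 1, making explicit that the coverage indicators $Y_i$ are mutually independent because they depend on disjoint sets of the $t_{ij}$, is a useful justification that the paper leaves implicit by citation.
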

\begin{proof}
We follow the notation described in \cite{Wu2017} to describe the CVaR oracle. From  \cite{Hoeffding1956,Samuels1965,Wang1993}, we know that function $\mathbb P( \sigma(x) = b)$ is equal to the probability mass function of a Poisson binomial distribution and use a dynamic program (DP)  to evaluate $A(x,i,j)$, which is defined as the probability that the selection $x$ covers $j$ nodes among the first $i$ nodes of $V_2$ for $0\le j\le i, i\in V_2$. \citet{Barlow1984,ZCAWZ14}, and  \citet{Wu2017}  use the DP  to  calculate $\mathbb P( \sigma(x) \ge b):=\sum_{j=b}^{m}A(x,m,j)$. Next, we show that we can use the same recursion  to evaluate $\text{CVaR}_{\alpha}(\sigma(x))$. 
From the definition of  $\text{VaR}_{\alpha}(\sigma(x))$ in \eqref{model:var_def}, we have $\text{VaR}_{\alpha}(\sigma(x)) = \min\{j \in \mathbb Z_+:\sum_{i= 0 }^{j} A(x,m,i) \ge \alpha \}.$ The  function $\text{CVaR}_{\alpha}(\sigma(x))$ is given by  
 \begin{align*}
 \text{CVaR}_{\alpha}(\sigma(x)) &=  \text{VaR}_{\alpha}(\sigma(x)) -\frac{1}{\alpha} \left(\sum_{j= 0 }^{\text{VaR}_{\alpha}(\sigma(x))-1}  \mathbb P( \sigma(x) = j) ( \text{VaR}_{\alpha}(\sigma(x)) -j)  \right) \\
 &=   \text{VaR}_{\alpha}(\sigma(x))  \left(1 - \frac{1}{\alpha}\sum_{j= 0 }^{\text{VaR}_{\alpha}(\sigma(x))-1} A(x,m,j)) \right)+ \frac{1}{\alpha}  \sum_{j= 0 }^{\text{VaR}_{\alpha}(\sigma(x))-1} jA(x,m,j).
 \end{align*} 
 For a given $x$, the running time of the DP is $\mathcal{O}(nm+m^2)$, because obtaining $P(x,j)$ for all $j \in V_2$ is  $\mathcal{O}(nm)$, and computing the recursion is $\mathcal{O}(m^2)$. 
\end{proof}

\begin{table}[htb]
	\caption{ Algorithm \ref{alg:DCG_2} with different inequalities}
	\label{Table:IC_Oracle_1}
	\begin{center}
		\scalebox{0.65}{\begin{tabular}{ 			 |p{1cm}p{1cm}p{1cm}||p{1.3cm}p{1.3cm}p{1.3cm}||p{1.3cm}p{1.3cm}p{1.3cm}||p{1.3cm}p{1.3cm}p{1.3cm}||p{1.3cm}p{1.3cm}p{1.3cm}|}
				\hline				
				& & & \multicolumn{3}{c||}{Oracle-LShape} &
				\multicolumn{3}{c||}{Oracle-Ineq \eqref{eq:new_cut}} &
				\multicolumn{3}{c||}{Oracle-Ineq \eqref{eq:relax_cut_uplift}} & \multicolumn{3}{c|}{Oracle-Ineq \eqref{eq:new_cut} and \eqref{eq:relax_cut_uplift}} \\				
				\cline{4-15}				
				{$|\mathcal V|$}  &  { $\alpha$} & { $k$}
				& {Time } & {Cuts } & {Nodes }  & {Time } & {Cuts } & {Nodes } & {Time } & {Cuts } & {Nodes }  & {Time } & {Cuts } & {Nodes } 
				\\			
				\hline			
				50 & 0.025  & 3 & 9 & 2315 & 2363 & 3 & 801 &1915& $\le 1$ & 224&336 & $\le 1$ & 398 &667 \\
				50 & 0.025  & 5 & $\ge 1800$ & 33988 & 36405& 681&16683&52677 &3&726&1960& 5&1462&2264\\
				50 & 0.05  & 3 & 10 & 2319 & 2363 & 2 & 799 &1734&$\le 1$ & 225 &366 &$\le 1$ &397&787\\
				50 & 0.05  & 5 & $\ge 1800$ & 32548 & 36569 & 492& 14978 & 43320& 3 &713&1859& 5& 1484& 2095
				\\
				100 & 0.025  & 3 & 1152 &19661 &20300 & 11& 1845 &4287& 24&1511&2321&15&1816&1252\\
				100 & 0.025  & 5 & $\ge 1800$ &21834& 53054&1154&17741&97510&825&13156&46789&560&15648&45091\\
				100 & 0.05  & 3 & 1205 & 19634 &20685 &9&1486&4476&24&1542&2400&15&1808&1224\\
				100 & 0.05  & 5 & $\ge 1800$ &21980&52479 &	505& 11408&74808&826&12911&47823&238&10684&21191
				\\
				150 & 0.025  & 3 &  $\ge 1800$ &18922&22045 &25&1832&12753& 457&6775&10868& 91&3046&1389\\
				150 & 0.025  & 5 &  $\ge 1800$ &19364&43796 &	1640&17781&109795&$\ge 1800$ & 13468&72593&1603&20278&92923\\
				150 & 0.05  & 3 & $\ge 1800$ &20394&21649 &6&1021&1115& 445&6705&10815&	59&1802&8489\\
				150 & 0.05  & 5 &	 $\ge 1800$ &20609&49523& $\ge 1800$ &10832&150285&$\ge 1800$& 14759&68158&1582&19978&75957\\					
				\hline			
		\end{tabular}}
	\end{center}
\end{table}	
 
We study the computational performance of our proposed methods on RASC. All instances were executed on a Windows 8.1 operating system with an Intel Core i5-4200U 1.60 GHz CPU, 8 GB DRAM, and x64 based processor using C++ with IBM ILOG CPLEX 12.7. We set up the mixed-integer programming search method as traditional branch-and-cut with the lazycallback function of CPLEX, where the presolve process is turned off. The number of threads is equal to one. All other CPLEX options are set to their default values.  
The time limit is set to 1800 seconds. For RASC, we generate a complete bipartite graph with arcs from all nodes $ i \in V_1$ to all $j \in V_2$. Let $\mathcal V = V_1 \cup V_2$. (Note that $V=V_1$ for RASC, because we only select nodes from $V_1$.) We follow the data generation scheme of \cite{Wu2017} to set $n$, $m$, and $a_{ij}$ for each arc $(i, j)$. We let  $k \in \{3,5\}$ and $\alpha \in \{0.025, 0.05\}$. The size of the  bipartite graphs is $|\mathcal V| \in \{50,100,150\}$.

In Table \ref{Table:IC_Oracle_1}, we  demonstrate the performance of Algorithm \ref{alg:DCG_2},  referred to  as ``Oracle", using three types of valid inequalities, the L-shaped cut of \cite{LL93}, inequality \eqref{eq:new_cut} and lifted inequality \eqref{eq:relax_cut_uplift}. Note that for a given incumbent solution, $\bar x$, which is a characteristic vector of the set $\bar X$, we consider the L-shaped cut
\begin{equation}\label{eq:Lcut}
\psi \le \text{CVaR}_{\alpha}(\sigma(\bar x)) +\sum_{j\in V\setminus \bar X} \big (\text{CVaR}_{1}(\sigma(V))-\text{CVaR}_{\alpha}(\sigma(\bar x))\big) x_j.
\end{equation} 
For $x_j=1$ for any $j\in V\setminus \bar X$, inequality \eqref{eq:Lcut} gives a valid upper bound for $\psi=\text{CVaR}_{\alpha}(\sigma(\bar x))\big)$, given by  $\text{CVaR}_{1}(\sigma(\bar x))\big)$  for any $\alpha \in (0,1]$,  from  Observation \ref{prop:CVaR_sub} (ii). Clearly, inequalities \eqref{eq:new_cut} are stronger than   inequalities \eqref{eq:Lcut}, because $\text{CVaR}_{1}(\sigma(\bar x+\mathbf {e}_{j}))\le\text{CVaR}_{1}(\sigma(V))$, for $j\in V\setminus \bar X$.  Column ``Oracle-LShape" denotes Algorithm \ref{alg:DCG_2} with the L-shaped cut \eqref{eq:Lcut}. Column ``Oracle-Ineq \eqref{eq:new_cut}" denotes Algorithm \ref{alg:DCG_2} with inequality \eqref{eq:new_cut}. Column ``Oracle-Ineq \eqref{eq:relax_cut_uplift}" denotes Algorithm \ref{alg:DCG_2} with inequality \eqref{eq:relax_cut_uplift}. Column ``Oracle-Ineq \eqref{eq:new_cut} and \eqref{eq:relax_cut_uplift}" denotes Algorithm \ref{alg:DCG_2} with both inequalities \eqref{eq:new_cut} and \eqref{eq:relax_cut_uplift}. Column ``Time" denotes the total solution  time for  each instance for RMP, in seconds. Column ``Cuts" denotes the total number of user cuts added to RMP. Column ``Nodes" denotes the number of branch-and-bound nodes traced in RMP. From \tabref{Table:IC_Oracle_1}, we observe  that the solution time increases as  $k$ and $|\mathcal V|$ increase for all methods. We observe that Oracle-LShape cannot solve most instances  within the time limit. Oracle-Ineq \eqref{eq:new_cut} and Oracle-Ineq \eqref{eq:relax_cut_uplift} are faster than Oracle-LShape for the instances that  are solvable by Oracle-LShape within the time limit. In addition, Oracle-Ineq \eqref{eq:new_cut} or Oracle-Ineq \eqref{eq:relax_cut_uplift} generates a fewer number of optimality cuts and traces a fewer number of nodes compared to Oracle-LShape.

For most of the  instances with $|\mathcal V| \le 100$, Oracle-Ineq \eqref{eq:relax_cut_uplift} generates a fewer number of optimality cuts and traces a fewer number of nodes compared to Oracle-Ineq \eqref{eq:new_cut}. However, for the instances with   $|\mathcal V| = 150$, the solution time of Oracle-Ineq \eqref{eq:relax_cut_uplift} is more than Oracle-Ineq \eqref{eq:new_cut}. Recall that for a given incumbent solution $\bar x$, inequality \eqref{eq:relax_cut_uplift} is generated by Algorithm \ref{alg:greedy_uplift}. In Algorithm \ref{alg:greedy_uplift}, we observe that for a given $\bar x$ and $1 \le i \le i' \le r$, we have 
$\bar \delta_{j_i}(\bar x) \le \bar \delta_{j_i'}(\bar x) \le \text{CVaR}_{\alpha}(\sigma(V_1))- \text{CVaR}_{\alpha}(\sigma(\bar x)) .$ From the above relation, if the size of $|\mathcal V|$ is large, then there may exist a large number of nodes with a high value of the lifting function $\delta_{j_i}(\bar x)$, which is close to the coefficients in the L-shaped cut \eqref{eq:Lcut}, i.e. $\text{CVaR}_{\alpha}(\sigma(V_1))- \text{CVaR}_{\alpha}(\sigma(\bar x))$. This explains why Oracle-Ineq \eqref{eq:relax_cut_uplift} does not perform well in instances with  large $|\mathcal V|$. To benefit from the complementary strengths of inequalities \eqref{eq:new_cut} and \eqref{eq:relax_cut_uplift}, we add both classes of inequalities at each iteration in Algorithm \ref{alg:greedy_uplift}. In \tabref{Table:IC_Oracle_1}, we observe that for the instances that  are not  solvable by either Oracle-Ineq \eqref{eq:relax_cut_uplift} or Oracle-Ineq \eqref{eq:new_cut} within 1800 seconds, the solution time of Oracle-Ineq \eqref{eq:new_cut} and \eqref{eq:relax_cut_uplift} is shorter. Furthermore, for a hard instance  $(|V|,\alpha,k)= (150,0.05,5)$, only Oracle-Ineq \eqref{eq:new_cut} and \eqref{eq:relax_cut_uplift}  provides an optimal solution within the time limit. In conclusion, our proposed inequalities enable the effective solution of difficult instances of the RASC problem that cannot be solved with existing methods within the set time limit.

\section*{Acknowledgments}
We thank the AE and the reviewer for their comments that improved the presentation. This work is supported, in part, by the National Science
Foundation Grant \#1907463.

\bibliographystyle{abbrvnat}
\setlength{\bibsep}{0.0pt}
	\bibliography{general}
	
\end{document}